\documentclass[11pt]{amsart}
\numberwithin{equation}{section}
\usepackage[english]{babel}
\usepackage[T1]{fontenc}
\usepackage{indentfirst}
\usepackage{enumitem}
\usepackage{amsmath,amssymb, amsbsy}
\usepackage{comment}
\usepackage{amsfonts}
\usepackage{hyperref}
\usepackage{cleveref}
\usepackage{esint}
\usepackage{latexsym}
\usepackage{amsthm}
\usepackage{bm}
\usepackage[dvips]{graphicx}
\usepackage{xcolor}
\usepackage{tikz}
\usepackage{pgfplots}
\usetikzlibrary{intersections}
\usepackage{tikz-3dplot}
\usepackage[outline]{contour}
\DeclareGraphicsExtensions{.pdf,.png,.jpg,.eps}
\usepackage{amsmath}
\usepackage{mathdots}
\usepackage{yhmath}
\usepackage{cancel}
\usepackage{color}
\usepackage{siunitx}
\usepackage{array}
\usepackage{multirow}
\usepackage{amssymb}
\usepackage{textcomp}
\usepackage{gensymb}
\usepackage{tabularx}
\usepackage{extarrows}
\usepackage{booktabs}
\usetikzlibrary{fadings}
\usetikzlibrary{patterns}
\usetikzlibrary{shadows.blur}
\usetikzlibrary{shapes}
\usepackage{bbm}
\usepackage[font=small,labelfont=bf]{caption}
\usepackage[utf8]{inputenc}
\usepackage[active]{srcltx}

\allowdisplaybreaks

\usepackage{doi}

\newcommand{\be}{\begin{equation}}
	\newcommand{\ee}{\end{equation}}

\newcommand{\brd}[1]{\mathbb{#1}}
\newcommand{\R}{\brd{R}}

\newcommand{\N}{\brd{N}}

\newcommand{\pa}{\partial}

\newcommand{\dist}{\operatorname{dist}}

\newcommand{\loc}{{\rm loc}}

\newtheorem{teo}{Theorem}[section]

\newtheorem{Theorem}[teo]{Theorem}

\newtheorem{proposition}[teo]{Proposition}
\newtheorem{lemma}[teo]{Lemma}
\theoremstyle{definition}

\newtheorem{remark}[teo]{Remark}

\pgfplotsset{compat=1.18}

\subjclass[2020] {35J61, 35Q55, 34C15, 35B05}
\keywords{Normalized solutions, prescribed mass, nonlinear
Schrödinger equation, bounded domains, Lane--Emden equation, Emden--Fowler transformation}

\title[Normalized solutions to the NLS equation in the ball for any prescribed mass]{Normalized solutions to the NLS equation in the ball for any prescribed mass}

\author{Nicola Soave}

\address{Nicola Soave\newline\indent Dipartimento di Matematica ``G. Peano'' \newline\indent Universit\`a degli Studi di Torino \newline\indent Via Carlo Alberto 10, 10123, Torino, Italy} \email{nicola.soave@unito.it}

\begin{document}

\begin{abstract}
Given $\rho>0$, we consider the problem
\[
\text{find $(\lambda,u) \in \R \times H_0^1(B)$ such that } \begin{cases} -\Delta u+\lambda u = |u|^{p-1}u & \text{in } B \\ \int_B u^2\,dx = \rho, \end{cases}
\]
where $B$ is a ball in $\R^N$, $N \ge 1$, and $1<p<2^*-1$. Without any further restriction on $N$, $\rho$ and $p$, we prove the existence of infinitely many radial solutions, and, in dimension $N \ge 4$, of at least one non-radial solution. 
\end{abstract}

\maketitle

\section{Introduction}
In this paper, we study the stationary nonlinear Schr\"odinger equation on a ball \(B \subset \R^N\), with \(N \geq 1\), subject to homogeneous Dirichlet boundary conditions:
\begin{equation}\label{pb ex ball}
\begin{cases}
-\Delta u+\lambda u = |u|^{p-1}u & \text{in } B \\
u=0 & \text{on } \pa B.
\end{cases}
\end{equation}
We seek solutions satisfying the prescribed mass constraint
\be\label{mass constraint}
\int_B u^2\,dx=\rho.
\ee
Since the mass is prescribed, the parameter \(\lambda\) in \eqref{pb ex ball} is itself an unknown. Our main result establishes the existence of infinitely many solutions to \eqref{pb ex ball}--\eqref{mass constraint}, for every \(\rho>0\), throughout the full Sobolev-subcritical range $1<p<2^*-1$, where \(2^*=2N/(N-2)\) if \(N\geq 3\), and \(2^*=+\infty\) if \(N=1,2\).

The main motivation for studying \eqref{pb ex ball} comes from the search for standing waves of the time-dependent nonlinear Schr\"odinger equation
$$
i\psi_t+\Delta\psi+|\psi|^{p-1}\psi=0
\qquad\text{in } \R\times\Omega,
$$
subject to homogeneous Dirichlet boundary conditions on \(\pa\Omega\), where \(\Omega\subset\R^N\). Equations of this type on a bounded domain arise in nonlinear optics in the study of
self-focusing of laser beams propagating in a bounded medium, the Dirichlet condition
modelling the confinement induced by the boundary; we refer to \cite{FiMe} and the references
therein for a discussion of the model. Such solutions have the form $\psi(t,x)=e^{i\lambda t}u(x)$, with \(\lambda\in\R\), and their spatial profile \(u\) solves the corresponding stationary equation.

From a variational perspective, there are two complementary approaches to the construction of standing waves. One may prescribe the \emph{frequency} \(\lambda\) and look for critical points of the associated action functional in \(H_0^1(\Omega)\); in this formulation, however, the mass of the resulting solution is not known a priori. Alternatively, one may prescribe the \emph{mass} \(\rho\) and seek critical points of the energy functional under the corresponding \(L^2\)-constraint, in which case the frequency \(\lambda\) emerges as an unknown Lagrange multiplier. Although these two approaches are closely related (see, for instance, the recent contributions \cite{DeDoGaSe, DoSeTi, JeLu}), it is not yet fully understood how results obtained in one framework can be transferred to the other. A notable exception is the case \(\Omega=\R^N\), where, for the pure-power equation considered here, a simple scaling argument makes the two formulations equivalent. 

The study of prescribed-mass solutions to Schr\"odinger-type equations has become an extremely active area of research over the past few decades. Even a partial overview of the results which have been obtained in this framework would go beyond the scope of the present note; we therefore focus only on the contributions directly related to ours. 

The mass-constrained formulation is particularly natural from the physical point of view. Indeed, besides being conserved by the time-dependent Schr\"odinger flow, the mass often has a direct physical interpretation: it represents, for example, the optical power in nonlinear optics and the total number of particles in a Bose--Einstein condensate, two major areas of application of the NLS. This formulation is equally relevant from a purely mathematical perspective, since it provides valuable information about qualitative properties of stationary solutions, most notably the stability or instability of the associated standing wave.

At the same time, the presence of the \(L^2\)-constraint introduces substantial mathematical difficulties compared with the fixed-frequency problem. These become especially pronounced in the \(L^2\)-supercritical range $1+4/N<p<2^*-1$, where the energy functional is unbounded from below on the \(L^2\)-sphere for every \(\rho>0\). Consequently, solutions can no longer be obtained by direct global minimization, and one is led instead to search for local minimizers or min--max critical points. In this setting, even the construction of a bounded Palais--Smale sequence is a delicate issue. 

In view of these additional difficulties, even some rather basic questions—long settled in the fixed-frequency setting—remain open for the prescribed-mass problem. Arguably, the most elementary unresolved question, at least in its formulation, is the following: 

\smallskip

\begin{center}
\emph{given a bounded domain \(\Omega\subset\R^N\), a prescribed mass \(\rho>0\), and an exponent \(1<p<2^*-1\), does problem \eqref{pb ex ball}--\eqref{mass constraint}, with \(\Omega\) in place of \(B\), admit at least one solution?} 
\end{center}

\smallskip

\noindent The answer is affirmative in the $L^2$-subcritical case $1<p<1+4/N$: for these exponents, the energy is bounded from below on the mass constraint, and existence and multiplicity can be treated with classical variational arguments (Krasnosel'skii genus); see e.g. \cite{PieVer, SongZou}. But for $1+4/N \le p<2^*-1$ the problem is essentially open, and existence and multiplicity are known only for small masses; we refer to  \cite{GaWe, JeSo, NoTaVe, PieVer, SongZou, Wei1, Wei2} for the available results, see also \cite{NoTaVe2, PePiVaVe, PieVerYu} for strictly related results. The difficulty of the question in this range of $p$ is made clear by some of the results obtained in \cite{NoTaVe, PieVer}. In particular, \cite[Theorem 1.2]{PieVer} establishes that, throughout the range $1+4/N \le p<2^*-1$ and for every \(k\in\N\), the set of masses \(\rho>0\) for which \eqref{pb ex ball}--\eqref{mass constraint} admits a solution of Morse index \(k\) is bounded. Consequently, along any sequence of solutions whose masses tend to infinity, the corresponding Morse indices must also diverge. This forced growth of the Morse index, together with the lack of a priori bounds for constrained Palais--Smale sequences and the fact that the energy is unbounded from below on the \(L^2\)-sphere, makes a variational proof of existence for large masses particularly delicate.

To the best of our knowledge, a complete answer to the previous question has so far been obtained only for very specific symmetric domains, such as hyper-rectangles, see \cite[Theorem 1.13]{PieVer}: exploiting the symmetry of the domain, solutions with arbitrary mass can be constructed by gluing together solutions with smaller masses in scaled (smaller) rectangles in a suitable way (the existence of solutions with small mass is also proved in \cite{PieVer}). Moreover, in an annulus, positive radial normalized solutions have been studied in \cite{LiSo} by analyzing the mass along the curve of positive ground states: there, arbitrary masses are attained whenever $N\ge3$, or $N=2$ and $p<5$, the threshold being dictated by the one-dimensional mass-critical exponent, since the solutions concentrate on a sphere as $\lambda\to+\infty$.

Regarding the very natural case of the ball, in \cite{PieVer} the authors adapt the idea successfully developed to treat rectangles in order to obtain solutions in the ball starting from solutions with smaller masses in circular sectors: in this way, they proved the existence of a non-radial solution for every \(\rho>0\) in the range
$
1<p<1+4/(N-1).
$
Note that the exponent \(1+4/(N-1)\) lies above the $L^2$-critical one $1+4/N$, but below the Sobolev-critical exponent
$
2^*-1=1+4/(N-2).
$
Therefore, the case of the ball is still open in full generality. 

The main purpose of the present note is to close this gap and provide a rather complete analysis of the problem. More precisely, for every $N \ge 1$, \(\rho>0\) and every \(1 < p<2^*-1\), we prove that problem \eqref{pb ex ball}--\eqref{mass constraint} admits infinitely many radial solutions; the proof of this result requires a very careful analysis of the behavior of radial solutions of the Schr\"odinger and Lane-Emden equations, an analysis which may be of independent interest. Furthermore, by refining the gluing technique used in \cite[Theorem 1.13]{PieVer}, we also show existence of a non-radial solution to \eqref{pb ex ball}-\eqref{mass constraint} for every \(\rho>0\) and every \(1+4/N \le p<2^*-1\), in any dimension $N \ge 4$.

\begin{Theorem}\label{thm: existence in the ball}
Let $N \ge 1$, $p \in (1,2^*-1)$, and $\rho>0$. Then problem \eqref{pb ex ball}-\eqref{mass constraint} has infinitely many radial solutions, with an increasing number of simple zeros tending to $+\infty$, and associated frequency $\lambda_j<0$. \\
Furthermore, if $N \ge 4$, $p \in [1+4/N, 2^*-1)$ and $\rho>0$, then problem \eqref{pb ex ball}-\eqref{mass constraint} also has at least a non-radial solution.
\end{Theorem}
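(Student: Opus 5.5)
For the radial part, I will reduce to an ODE shooting and scaling argument. Fix the unit ball $B=B_1$; general radii follow by rescaling. For each $k\in\N$, let $w_k$ be the radial solution of the Lane--Emden problem $-\Delta w=|w|^{p-1}w$ in $B_1$, $w=0$ on $\pa B_1$, with exactly $k-1$ interior simple zeros. Such a solution exists and is unique up to sign, by shooting from $w(0)=\alpha$ combined with the scaling invariance of the Lane--Emden equation.

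For $\lambda\in\R$, let $u_{\lambda,k}$ be the radial solution of $-\Delta u+\lambda u=|u|^{p-1}u$ in $B_1$ with Dirichlet conditions and $k-1$ nodes. I will construct it by shooting: the solution $v(r;\alpha)$ of $v''+\frac{N-1}{r}v'-\lambda v+|v|^{p-1}v=0$, $v(0)=\alpha$, $v'(0)=0$, has a $k$-th zero $r_k(\alpha)$. For $\lambda<0$ I will check that $r_k(\alpha)\to0$ as $\alpha\to\infty$, using the blow-up rescaling to Lane--Emden. I will also check that $r_k(\alpha)\to k$-th zero of the linear problem, namely $\sqrt{|\lambda|}^{-1}j_{k}$ with $j_k$ the Bessel-type zero, as $\alpha\to0$. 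Hence for $-\lambda>\lambda_k(B_1)$, the $k$-th radial Dirichlet eigenvalue, continuity gives some $\alpha$ with $r_k(\alpha)=1$. So for each $\lambda<-\lambda_k^{rad}$ there exists $u_{\lambda,k}$.

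Next I will define the mass $M_k(\lambda)=\int_{B_1}u_{\lambda,k}^2$ on a continuum of such solutions, taken as a connected branch from Rabinowitz global bifurcation from $(-\lambda_k^{rad},0)$ inside the radial class with fixed nodal count. The key step is to show that this connected branch, parametrized in $\lambda\in(-\infty,-\lambda_k^{rad})$, has mass ranging from $0$ to $+\infty$. As $\lambda\to-\lambda_k^{rad}$ along the bifurcation, $M_k\to0$. As $\lambda\to-\infty$, I will use the Emden--Fowler transformation and the scaling $u(x)=|\lambda|^{1/(p-1)}U(|\lambda|^{1/2}x)$. This turns the problem into $-\Delta U-U=|U|^{p-1}U$ in $B_{\sqrt{|\lambda|}}$, whose mass is $|\lambda|^{2/(p-1)-N/2}\int U^2$.

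For $p<1+4/N$ the prefactor diverges. For larger $p$ it decays, so I will need to show that $\int_{B_R}U^2$ grows faster than $R^{N-4/(p-1)}$. I expect this from the oscillatory tail: solutions of $-\Delta U-U=|U|^{p-1}U$ decay like $r^{-(N-1)/2}$ with oscillations of unit period. An equivalent way to see it is through $\lambda\to-\infty$ with $k$ fixed, where the solution is, after Lane--Emden rescaling, close to $\mu^{2/(p-1)}w_k(\mu x)$ with $\mu\to\infty$ on an inner ball. The nodes, however, are forced into $B_1$ with linear-type oscillation $\sim|\lambda|^{1/2}$ only if $k$ grows, so this needs care. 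I expect this asymptotic analysis of $M_k(\lambda)$ as $\lambda\to-\infty$ to be the main obstacle. My alternative is to let the number of nodes grow with the branch index and prove $\sup M_k=\infty$ for all large $k$, using that the $L^2$ norm of the bulk oscillating region, where $u\approx|\lambda|^{1/(p-1)}$ amplitude, dominates. Granting unboundedness, the intermediate value theorem gives $\rho$ on each branch $k\ge k_0(\rho)$, yielding infinitely many radial solutions with $k-1\to\infty$ nodes and $\lambda_j<0$.

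For the non-radial solution with $N\ge4$, I will refine the gluing in \cite[Theorem 1.13]{PieVer}. Writing $\R^N=\R^2\times\R^{N-2}$, I will consider functions odd with respect to angular sectors $\{\theta\in(0,\pi/m)\}$ in the first $\R^2$ factor. A solution on the wedge-type domain $W_m=B\cap\{0<\theta<\pi/m\}$, reflected oddly, gives a non-radial solution on $B$ with mass $2m$ times its wedge mass. I will then use the radial-type existence on $W_m$ via the same bifurcation and mass-to-infinity argument, applied to the first nodal branch on $W_m$ instead of $B$. The restriction $N\ge4$ should make the wedge problem effectively of dimension $N-2+\ldots$, giving enough room that the mass range covers $(0,\infty)$.
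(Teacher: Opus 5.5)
Your proposal has genuine gaps in both parts.

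\emph{Radial part.} The $\lambda$-range is reversed. Your own shooting limits ($r_k(\alpha)\to0$ as $\alpha\to\infty$ and $r_k(\alpha)\to j_k/\sqrt{|\lambda|}$ as $\alpha\to0$) give $r_k(\alpha)=1$ only when $j_k/\sqrt{|\lambda|}>1$, i.e.\ for $-\lambda_k^{\mathrm{rad}}<\lambda<0$. For $\lambda\le-\lambda_k^{\mathrm{rad}}$ there is no radial solution with $k$ nodal regions at all: Sturm comparison of $(r^{N-1}u')'+r^{N-1}(|\lambda|+|u|^{p-1})u=0$ with the $k$-th radial eigenfunction places the $k$-th zero of $u$ strictly inside $(0,1)$. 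So the branch bifurcating from $-\lambda_k^{\mathrm{rad}}$ lives in $(-\lambda_k^{\mathrm{rad}},+\infty)$, and your $\lambda\to-\infty$ analysis concerns an empty set.

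More seriously, for fixed $k$ the mass cannot be expected to sweep $(0,+\infty)$ when $p\ge1+4/N$. It tends to $0$ at the bifurcation point. As $\lambda\to+\infty$, the rescaling $u=\lambda^{1/(p-1)}U(\sqrt\lambda\,x)$, with $U$ close to a decaying whole-space bound state, gives mass $\lambda^{\frac2{p-1}-\frac N2}\int U^2$, which does not blow up. Thus ``$\sup M_k=\infty$ for all large $k$'' is false as written. What is really needed is a point on the $k$-th branch whose mass diverges as $k\to\infty$, and ``granting unboundedness'' grants exactly the core of the theorem.

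The paper supplies this point at $\lambda=0$. It avoids Rabinowitz altogether via the explicit continuous family $\gamma\mapsto u_{j,\gamma}$: shoot from $v_\gamma(0)=1$ for $v''+\frac{N-1}{r}v'+\gamma v+|v|^{p-1}v=0$ and rescale by the $j$-th zero. Then $\gamma=0$ is Lane--Emden ($\lambda=0$), and $\gamma\to\infty$ is the linear limit, where the mass tends to $0$. Proposition~\ref{lem: stima massa} proves $M_j(0)\asymp j^{4/(p-1)}$ through the Emden--Fowler change $t=r^b/b$, $v=r^{-a}y$; the condition $b>0$ is exactly where $p<2^*-1$ enters. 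The energy of $y$ has a positive limit, and $y$ converges with rate $O(t^{-1})$ to a periodic orbit. Hence $t_j\asymp j$, $r_j\asymp j^{1/b}$ and $\int_0^{r_j}v^2r^{N-1}\,dr\asymp r_j^{N-2a}$. The intermediate value theorem along this path, i.e.\ for $\lambda\in(-\lambda_j^{\mathrm{rad}},0]$, then also yields $\lambda<0$ as claimed. Your heuristic about a bulk region of amplitude $|\lambda|^{1/(p-1)}$ does not replace this. As Remark~\ref{rem: on Kaj} explains, even sharp energy estimates give no lower bound on the mass in this range.

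\emph{Non-radial part.} The wedge $W_m$ is not radial, so no shooting is available. On a fixed cell, one-signed solutions have no mass-to-infinity mechanism in the $L^2$-supercritical range; the radial argument needed the number of nodal regions to diverge. If instead you let $m\to\infty$ and use the small-mass existence of \cite{PieVer} on $W_m$, you get back exactly \cite[Theorem 1.13]{PieVer}. Since $\Lambda_1(W_m)\sim m^2$, the admissible masses reach $\gtrsim m\,\Lambda_1(W_m)^{\frac2{p-1}-\frac N2}\sim m^{1+\frac4{p-1}-N}$, which diverges only for $p<1+4/(N-1)$.

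The missing idea is to cut sectors of opening $\pi/k$ simultaneously in $m=\lfloor N/2\rfloor\ge2$ disjoint coordinate planes. The resulting convex cell $D_k$ is reproduced $(2k)^m$ times by odd reflections, while still containing a ball of radius $\gtrsim1/k$. So $\Lambda_1(D_k)\le Ck^2$, and the admissible masses reach $\gtrsim k^{m+\frac4{p-1}-N}$, which diverges iff $p<1+4/(N-m)$. This covers $[1+4/N,2^*-1)$ precisely when $m\ge2$, i.e.\ $N\ge4$.
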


To prove the theorem, we treat the radial and the non-radial cases separately. The non-radial case, presented in Section \ref{sec: non-rad}, is a refinement of the strategy already developed in the proof of \cite[Theorem 1.13]{PieVer}. We also refer to \cite[Theorem 1.7]{Wei2} for a somewhat similar construction in dimension $N \ge 4$ which gives multiplicity of non-radial solutions, but only for small masses. The radial case is more delicate, and is the content of Section \ref{sec: rad}. A crucial ingredient consists in giving careful estimates of the mass of radial solutions with a given number of zeros to the Lane-Emden equation in the ball. This is the content of Proposition \ref{lem: stima massa} below, which seems to be new and may be of independent interest. Similar estimates, but on the Sobolev norm, were obtained in \cite{Kaj} (the connection between our result and the one in \cite{Kaj} will be discussed in more detail in Remark \ref{rem: on Kaj}). We also mention that we find further information on the associated frequencies $\lambda_j$. This is discussed in Remark \ref{rem on lambda_j}.

\begin{remark}
We conclude by pointing out some natural questions left open by our analysis.

\emph{(i) The Sobolev-critical threshold.} The restriction $p<2^*-1$ enters the proof of the
radial case only through the exponent $b$ used in the Emden-Fowler transformation introduced in Subsection \ref{sub: 0}, which is
positive precisely when $p<2^*-1$; for $p\ge2^*-1$ the method breaks down. The question is
nonetheless meaningful. On the one hand, for $N\ge3$ and $\Omega$ star-shaped, the Pohozaev
identity implies that \eqref{pb ex ball} has no nontrivial solution when $p\ge2^*-1$ and $\lambda\ge0$. On the other hand, all the solutions we construct have $\lambda<0$, and in that regime solutions do
exist: for $p=2^*-1$ and $N\ge4$ this is the content of the classical results of Brezis and
Nirenberg \cite{BrNi}. Whether every mass is attained in the Sobolev-critical and supercritical
regime therefore seems to be an interesting open problem. In the same direction, our
approach only produces solutions with $\lambda<0$; positive frequencies would correspond to
$\gamma<0$ in \eqref{def v} below, in which case $v_\gamma$ need not oscillate and the method gives no
information.

\emph{(ii) Non-radial solutions in low dimension.} For $m=1$ the argument of Section 3 reduces to that of 
 \cite[Theorem 1.13]{PieVer}, and does not provide solutions beyond their threshold $p<1+4/(N-1)$. Consequently, for $N=2$ and $N=3$ the existence of a non-radial solution for every prescribed mass remains open in the range $1+4/(N-1)\le p<2^*-1$, that is, $p\ge5$ for $N=2$ and $3\le p<5$ for $N=3$. 

\emph{(iii) Other domains.} Our proof of the radial case rests on the shooting method, hence
on the radial structure of $B$ and on the ``regular singular point $r=0$", and is not available
even for an annulus. The question raised in the introduction remains open for every bounded
domain which does not admit a tiling of the type exploited in \cite{PieVer}.
\end{remark}

Throughout the paper, $C$ denotes a positive constant which may change from line to line.


\section{Proof of Theorem \ref{thm: existence in the ball} - radial case}\label{sec: rad}

Since we are interested in solutions to \eqref{pb ex ball}-\eqref{mass constraint} for every $\rho>0$, without loss of generality we suppose that $B=B_1$ is the unit ball. The general case can be covered by scaling. To search for radial solutions, we consider the initial value problem
\be\label{def v}
\begin{cases} 
v_\gamma''+\frac{N-1}{r} v_\gamma' +\gamma v_\gamma + |v_\gamma|^{p-1} v_\gamma=0 & \text{for }r \in (0,+\infty)\\
v_\gamma(0) = 1, \qquad v_\gamma'(0)=0,
\end{cases}
\ee
with $\gamma \ge 0$. Standard ODE theory provides a unique local solution. Moreover,
the energy
\be\label{def energy}
  \mathcal E_\gamma(r):=\frac12|v_\gamma'(r)|^2+\frac{\gamma}{2}|v_\gamma(r)|^2
  +\frac{1}{p+1}|v_\gamma(r)|^{p+1}
\ee
satisfies 
\[
\mathcal E_\gamma'(r)=-\frac{N-1}{r}|v_\gamma'(r)|^2\le0 \quad \implies \quad \mathcal E_\gamma(r)\le\mathcal E_\gamma(0)=\frac\gamma2+\frac1{p+1} \quad \forall r >0;
\]
in particular $v_\gamma$ and $v_\gamma'$ are bounded, so that the solution extends to the
whole of $(0,+\infty)$, and $|v_0|\le1$ on $[0,+\infty)$. All the zeros of $v_\gamma$ in
$(0,+\infty)$ are simple: if $v_\gamma(r_0)=v_\gamma'(r_0)=0$ for some $r_0>0$, then
$v_\gamma\equiv0$ by uniqueness, contradicting $v_\gamma(0)=1$. Hence the zeros are
isolated, and each bounded interval contains finitely many of them.

Furthermore, for every $p\in(1,2^*-1)$ the function $v_\gamma$ has infinitely many zeros,
which can therefore be ordered in a sequence tending to $+\infty$. For $\gamma>0$ this
follows from Sturm oscillation theory: the function $w:=r^{\frac{N-1}{2}}v_\gamma$, which
has the same zeros as $v_\gamma$ in $(0,+\infty)$, solves
\[
  w''+\Big(\gamma+|v_\gamma|^{p-1}-\frac{(N-1)(N-3)}{4r^2}\Big)w=0 ,
\]
and the coefficient is not smaller than $\gamma/2$ for $r$ large; comparison with
$\zeta''+\frac{\gamma}{2}\zeta=0$ shows that $w$ vanishes in every interval of length
$\pi\sqrt{2/\gamma}$ contained in a neighbourhood of $+\infty$. For $\gamma=0$ the
oscillatory character of $v_0$ is classical, see \cite[Corollary 6.7]{NiNa} and \cite{Kaj}.

We denote by $R_j(\gamma)>0$ the $j$-th zero of $v_\gamma$: since $v_\gamma$ depends
continuously on the parameter $\gamma$ in $C^1_{\mathrm{loc}}([0,+\infty))$ and, as
observed above, $v_\gamma'(R_j(\gamma))\neq0$, the implicit function theorem ensures
that $R_j$ is continuous (in fact $C^1$) on the whole of $[0,+\infty)$, the endpoint
$\gamma=0$ included.

Next, we define
\[
u_{j,\gamma}(r) := R_j(\gamma)^\frac{2}{p-1} v_\gamma(R_j(\gamma)r).
\]
It is easy to check that $u_{j,\gamma}$ is a radial solution to \eqref{pb ex ball} with $\lambda_{j,\gamma}=-\gamma R_j(\gamma)^2$, with exactly $j$ nodal regions (i.e. exactly $j-1$ zeros in $(0,1)$). The mass of $u_{j,\gamma}$ is 
\be\label{mass gamma}
M_j(\gamma):= \int_{B_1} u_{j,\gamma}^2\,dx = |\mathbb{S}^{N-1}| R_j(\gamma)^\frac{4}{p-1} \int_0^1 v_\gamma^2(R_j(\gamma)r)r^{N-1}\,dr,
\ee
and depends continuously on $\gamma \in [0,+\infty)$ as well. The idea of the proof is simple: we aim to study the behavior of $M_j(\gamma)$ at $\gamma =0$ and as $\gamma \to +\infty$. We claim that 
\be\label{cl M infty}
M_j(\gamma) \to 0^+\qquad \text{as $\gamma \to +\infty$, for every $j$},
\ee
and that 
\be\label{cl M 0}
M_j(0) \to +\infty \qquad \text{as $j \to +\infty$}
\ee
(with a precise rate). Let then $\rho>0$ be arbitrarily chosen. By \eqref{cl M 0}, there exists $\bar j \in \N$ such $M_j(0) > \rho$ for every $j > \bar j$. Since moreover $M_j(\gamma) \to 0^+$ as $\gamma \to +\infty$, by continuity there exists $\bar \gamma \in (0,+\infty)$ such that $M_j(\bar \gamma)=\rho$, namely $(\lambda_{j, \bar \gamma}, u_{j,\bar \gamma})$ is a radial solution to \eqref{pb ex ball}-\eqref{mass constraint} with exactly $j-1$ simple zeros; this gives the conclusion of Theorem \ref{thm: existence in the ball} in the radial case.

The rest of the section is devoted to the proof of \eqref{cl M infty} and \eqref{cl M 0}. As we shall see, the proof of \eqref{cl M infty} is rather simple, while the proof of \eqref{cl M 0} is more delicate. 
\begin{remark}\label{rem: on Kaj}
The case $\gamma=0$ corresponds to the Lane-Emden equation. Radial solutions of the Lane-Emden equation have been widely studied, and
several fine asymptotic estimates are available; we refer for instance to
\cite{DeMaIaPa1, DeMaIaPa2, Kaj} and references therein. In particular, for the solutions $u_{j,0}$ considered here, \cite{Kaj}
provides the sharp two-sided estimate
$\|\nabla u_{j,0}\|_{L^2}^2\asymp j^{2(p+1)/(p-1)}$ (see \cite[Example, p.~264]{Kaj}).
This does \emph{not} yield the information on the mass which we need: by the
Nehari identity $\|\nabla u_{j,0}\|_{L^2}^2=\|u_{j,0}\|_{L^{p+1}}^{p+1}$ and the
Gagliardo--Nirenberg inequality one only gets
\[
\|u_{j,0}\|_{L^2}^{(1-\theta)(p+1)} \ge c\,\|\nabla u_{j,0}\|_{L^2}^{\,2-\frac{N(p-1)}{2}},
\qquad \theta=\frac{N(p-1)}{2(p+1)},
\]
whose right hand side is bounded away from $0$ only for $p<1+4/N$, and is
infinitesimal as $j\to\infty$ when $p>1+4/N$. Thus, precisely in the
$L^2$-critical and supercritical regime --- the range of interest here --- the
known estimates give no lower bound on the mass, and a direct analysis is
needed. 
\end{remark}

\subsection{Asymptotics of $M_j(\gamma)$ as $\gamma \to +\infty$}
Let $z_\gamma(r) = v_\gamma(r/\sqrt{\gamma})$, which satisfies
\be\label{eq z}
z_\gamma''+\frac{N-1}{r}z_\gamma' + z_\gamma + \frac{1}{\gamma} |z_\gamma|^{p-1}z_\gamma  =0 \qquad \text{for $r \in (0,+\infty)$}.
\ee
We observe that the energy
\[
E_\gamma(r) := \frac{1}{\gamma} \mathcal{E}_\gamma\left( \frac{r}{\sqrt{\gamma}}\right), 
\]
with $\mathcal{E}_\gamma$ defined in \eqref{def energy}, is a Lyapunov functional, and
\be\label{uniform bound z}
E_\gamma(r) \le E_\gamma(0) = \frac12 + \frac{1}{\gamma(p+1)} \quad \implies \quad |z_\gamma(r)|+|z_\gamma'(r)| \le C \quad \forall r \in [0,+\infty),
\ee
with $C>0$ independent of $\gamma >1$. These upper bounds, together with Eq. \eqref{eq z}, ensure that up to a subsequence $z_\gamma \to z$ in $C^2_{\loc}([0,+\infty))$, where $z$ solves the linear problem
\[
\begin{cases} 
z''+\frac{N-1}{r} z' +z =0 & \text{for }r \in (0,+\infty)\\
z(0) = 1, \qquad z'(0)=0.
\end{cases}
\]
Actually, since the solution of this problem is unique, the convergence takes place not only up to a subsequence, but along the full family $\{z_\gamma\}$. Classical Sturm oscillation theory ensures that $z$ has infinitely many simple zeros $0<\zeta_1<\zeta_2<\dots$ (note that $z(r)=c_N r^{-(N-2)/2}J_{(N-2)/2}(r)$, where $J_{(N-2)/2}$ is a Bessel function). Now, clearly the zeros $\bar R_j(\gamma)$ of $z_\gamma$ are determined by those of $v_\gamma$ via the relation $\bar R_j(\gamma) = \sqrt{\gamma} R_j(\gamma)$, and by $C^1$ convergence we have $\bar R_j(\gamma) \to \zeta_j$ as $\gamma \to +\infty$. Since $\zeta_j>0$, we infer that $R_j(\gamma) \to 0^+$, and hence, recalling \eqref{mass gamma} and \eqref{uniform bound z}, it follows that
\[
M_j(\gamma) \le C_N R_j(\gamma)^\frac{4}{p-1} \|v_\gamma\|_{L^\infty([0,+\infty))}^2 = C_N R_j(\gamma)^\frac{4}{p-1} \|z_\gamma\|_{L^\infty([0,+\infty))}^2 \to 0^+
\]
as $\gamma \to +\infty$, which proves claim \eqref{cl M infty}.

\subsection{Behavior of $M_j(0)$}\label{sub: 0} Here we analyze the case $\gamma=0$, which corresponds to the Lane-Emden equation, and turns out to be much more involved. 

Let $v=v_0$ defined in \eqref{def v}. We consider the following transformation:
\[
t(r)=\frac{r^b}{b} \quad \text{and} \quad v(r) = r^{-a}y(t(r)),
\]
with $a,b>0$ to be chosen appropriately. Denoting by $\dot y$ and $\ddot y$ the derivatives of $y$ with respect to $t$, it is not difficult to check that 
\[
v''+\frac{N-1}{r} v' = r^{2b-a-2} \ddot y + (b+N-2a-2) r^{b-a-2} \dot y+a(a+2-N) r^{-a-2} y.
\] 
On the other hand, the left hand side is also equal to
\[
-|v|^{p-1} v = -r^{-ap}|y|^{p-1}y.
\]
Thus, we obtain a differential equation for $y$. We choose $a$ and $b$ in such a way that the coefficient of the first order term vanishes, and that the power term in front of $\ddot y$ coincides with the one in front of $|y|^{p-1}y$: namely, we impose the conditions
\[
\begin{cases}
b+N-2a-2=0\\
2b-a-2 = -ap
\end{cases} \quad \iff \quad a= \frac{2(N-1)}{p+3}, \quad b= \frac{N+2-(N-2)p}{p+3}.
\]
In this way, the differential equation for $y$ can be written as
\[
\ddot y + |y|^{p-1} y + \frac{\kappa}{t^2} y=0 \qquad \text{for $t \in (0,+\infty)$, where }\kappa:=\frac{a(a+2-N)}{b^2}.
\]
This suggests that, for large times, $y$ behaves like a solution of the autonomous problem $\ddot Y+ |Y|^{p-1}Y=0$, which has infinitely many periodic solutions whose nodal behavior can be conveniently controlled (of course, this heuristic has to be made rigorous). We wish to exploit this information, in order to obtain the desired estimates on $M_j(0)$. In what follows, we denote by $r_j=R_j(0)$ and $t_j$ the simple zeros of $v$ and $y$, respectively; clearly, $t_j=r_j^b/b$, and both the sequences diverge as $j \to +\infty$.

\medskip

\noindent\textbf{Step 1) Study of the energy.} We consider both the classical and a modified energy associated with $y$, defined by
\[
H(t):= \frac12|\dot y(t)|^2 +\frac{1}{p+1}|y(t)|^{p+1}, \quad K(t):= H(t) + \frac{\kappa}{2t^2} y^2(t).
\]
Moreover, for any $c \ge 0$ we denote by $\Gamma_c$ the level set 
\[
\Gamma_c:= \{\mathcal{H}(q,v)=c\} \subset \R^2, \quad \text{where} \quad
\mathcal{H}(q,v) = \frac{1}{2}|v|^2+\frac1{p+1}|q|^{p+1}
\]
is the Hamiltonian associated with the autonomous nonlinear oscillator. Note that $\Gamma_c$ is a closed regular curve for every $c>0$.
\begin{lemma}\label{lem: studio energia}
There exist $C, \ell>0$ such that:
\begin{itemize}
\item[($i$)] $|y(t)|+|\dot y(t)| \le C$ for every $t \ge 1$, and $H(t), K(t) \to \ell$ as $t \to +\infty$;
\item[($ii$)] $\dist((y(t), \dot y(t)), \Gamma_\ell) \to 0$ as $t \to +\infty$;
\item[($iii$)] for all sufficiently large $t$
\be\label{rate K}
|H(t)-\ell| \le \frac{C}{t^2}.
\ee
\end{itemize}
\end{lemma}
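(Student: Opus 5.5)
Differentiating along solutions of $\ddot y + |y|^{p-1}y + \kappa t^{-2} y = 0$ gives
\[
\dot H(t) = -\frac{\kappa}{t^2}\, y\dot y, \qquad \dot K(t) = -\frac{\kappa}{t^3}\, y^2 .
\]
So $K$ is monotone on $(0,+\infty)$: nonincreasing if $\kappa\ge 0$, nondecreasing if $\kappa<0$. The sign of $\kappa$ is that of $a+2-N$, which can have either sign depending on $N$ and $p$. I plan to use $K$ as the main tool and to transfer information to $H$ at the end.

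\textbf{Step 1: bounds for ($i$).} I will use the original variables. For $\gamma=0$ the energy $\mathcal E_0$ gives $|v|\le 1$ and $|v'|\le C$ on $[0,+\infty)$. This only yields growth bounds for $y=r^a v$, so instead I will work directly with $K$ on $[1,+\infty)$.

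If $\kappa\ge0$, then $K$ is nonincreasing and $K\ge H\ge0$. Hence $K(t)\le K(1)$, which bounds $H$ and therefore $|y|+|\dot y|$.

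If $\kappa<0$, then $K$ is nondecreasing, and I need an upper bound. The term $\kappa y^2/(2t^2)$ is dominated by $|y|^{p+1}$ where $|y|$ is large, since $p+1>2$ and $t\ge1$. More precisely, $\frac{|\kappa|}{2t^2}y^2 \le \frac{1}{2(p+1)}|y|^{p+1} + C$, so $K\ge \frac12 H - C$. With $\dot K = |\kappa|\, y^2/t^3$ this gives
\[
\dot K \le \frac{C(K+C)^{2/(p+1)}}{t^3},
\]
and since $2/(p+1)<1$ and $t^{-3}$ is integrable, a Gronwall/Bihari argument shows $K$ is bounded. In both cases $y$ and $\dot y$ are bounded on $[1,\infty)$. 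Then $\dot K = O(t^{-3})$ is integrable, so $K\to\ell$, and $H-K=O(t^{-2})\to0$, so $H\to\ell$ as well.

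\textbf{Step 2: the rate ($iii$).} From $|\dot K|\le Ct^{-3}$ I get $|K(t)-\ell|\le \int_t^\infty Cs^{-3}\,ds = Ct^{-2}$. Combined with $|H-K|\le Ct^{-2}$, this gives \eqref{rate K}.

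\textbf{Step 3: $\ell>0$ and ($ii$).} Part ($ii$) follows immediately once $\ell>0$: the function $\mathcal H$ is continuous and proper, $\mathcal H(y,\dot y)=H\to\ell$, and the bounded sublevel region near $\Gamma_\ell$ shrinks to $\Gamma_\ell$ when $\ell>0$, because $\nabla\mathcal H\neq0$ on $\Gamma_\ell$. So the main obstacle is proving $\ell>0$.

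Suppose instead that $\ell=0$. Then $H(t)\le Ct^{-2}$, so $|y(t)|\le Ct^{-2/(p+1)}$, which translates into $|v(r)|\le C r^{-a-2b/(p+1)}$. I will use that $v$ has infinitely many zeros (recalled in the preliminaries). On each nodal interval $[t_j,t_{j+1}]$ I plan a Sturm-type comparison. On the part where $y$ is small, the equation is close to $\ddot y + \kappa t^{-2}y=0$; when $\kappa\le 1/4$ this is non-oscillatory, and in general its solutions have zeros spaced roughly like $t$ in log scale. On the other hand, $|y|^{p-1}\le Ct^{-2(p-1)/(p+1)}$, so the nonlinear potential contributes spacing at least of order $t^{(p-1)/(p+1)}$. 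I then want to show that the total potential $|y|^{p-1}+\kappa t^{-2}$ is too weak to sustain oscillation with $H$ decaying, for instance by integrating $\dot K$ over half-periods.

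A cleaner route, which I would try first, is a Pohozaev-type identity for $v$ on $B_{r_j}$. For Lane--Emden with $p$ subcritical, the boundary term $r_j^N v'(r_j)^2$ equals $c\int_{B_{r_j}}|v|^{p+1}$ with $c>0$. Since this integral is increasing and nonzero, $r_j^N v'(r_j)^2\ge c_0>0$. Translated to $y$, and using that $y(t_j)=0$ so $H(t_j)=\tfrac12\dot y(t_j)^2$, I expect this to give $H(t_j)\ge c_0'>0$ once the exponents are checked (they should match exactly by the choice of $a$ and $b$). This forces $\ell>0$.
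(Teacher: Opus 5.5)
Steps 1 and 2 are correct: your Bihari argument for $\kappa<0$ replaces the paper's Gronwall bound on $H$, and your rate argument is the paper's. The last step, which you rightly call the crux, contains an error, because the exponents do \emph{not} match. Since $y(t_j)=0$, you have $v'(r_j)=r_j^{b-a-1}\dot y(t_j)$. Using $2a=b+N-2$, this gives
\[
r_j^N v'(r_j)^2 = r_j^{N+2b-2a-2}\,\dot y(t_j)^2 = r_j^{b}\,\dot y(t_j)^2 = 2b\,t_j\,H(t_j).
\]
So the Pohozaev identity only yields $H(t_j)\ge c/t_j$, where $b>0$ precisely because $p<2^*-1$. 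This lower bound decays to $0$ and by itself says nothing about $\ell$. As written, ``this forces $\ell>0$'' does not follow.

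You already have the repair. Step 2 gives $|H(t)-\ell|\le Ct^{-2}$ without using $\ell>0$. If $\ell=0$, then $c_0\le 2b\,t_jH(t_j)\le 2bC/t_j\to0$, a contradiction; equivalently, $\ell\ge c/t_j-C/t_j^2>0$ for $j$ large.

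This is the paper's strategy for $N=2$, where $P(r)=r^2\big(\frac12 v'^2+\frac1{p+1}|v|^{p+1}\big)$ is exactly your Pohozaev function. The paper pairs it with a separately derived upper bound $H(t_j)\le Ct_j^{-2(p+1)/(p-1)}$, but the $O(t^{-2})$ rate already suffices. With the general-$N$ Pohozaev identity, your corrected argument covers all dimensions at once.

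Two smaller points. The sign of $\kappa$ depends only on $N$, not on $p$: $\kappa<0$ for $N\ge3$, $\kappa=\frac14$ for $N=2$, $\kappa=0$ for $N=1$. For $\kappa<0$, positivity of $\ell$ is immediate, since $K$ is nondecreasing and $\ell\ge K(t_1)=\frac12\dot y(t_1)^2>0$. The Sturm-comparison sketch is unnecessary.
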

\begin{proof}
We compute
\be\label{der K}
\dot K(t) = - \frac{\kappa y^2(t)}{t^3},
\ee
and we distinguish three cases for the proof of point ($i$).

\emph{Case 1: $N \ge 3$.} For $N \ge 3$, the constant $\kappa$ is negative for every $p>1$. Therefore, $K$ is increasing. We claim that it is also bounded for $t>1$. 

To prove the claim, we note that by the Young inequality, for every $t >1$
\[
|\dot H(t)| = \frac{|\kappa|}{t^2}|y||\dot y|  \le \frac{|\kappa|}{t^2}\left(\frac12|\dot y|^2 + \frac{2}{p+1}|y|^{p+1} + C_p\right) \le \frac{C}{t^2}(H(t)+1),
\]
for some $C>0$ depending on $p$ and $N$. Therefore
\[
\frac{\dot H(t)}{H(t)+1} \le \frac{C}{t^2} \quad \implies \quad H(t) +1 \le C e^{C} \quad \forall t >1,
\]
namely $H$ remains bounded for $t>1$; this ensures that $y$ and $\dot y$ are bounded as well, and hence $\dot K(t) \le C/t^3$ for every $t>1$. By integrating again, we infer that also $K(t)$ remains bounded as $t \to +\infty$, which in turn implies that both $K$ and $H$ tend to the same constant $\ell \in [0,+\infty)$ as $t \to +\infty$. It remains to show that $\ell>0$. Suppose by contradiction that $\ell = 0$. Recalling that $t_j$ denotes a zero of $y$, we have 
\[
0 \le \frac12 |\dot y(t_j)|^2 = K(t_j) \le \lim_{t \to +\infty} K(t) = 0,
\]
where we used the monotonicity of $K$. Therefore $y(t_j)=\dot y(t_j) = 0$, and by the uniqueness theorem for ODEs we infer that $y \equiv 0$, a contradiction.

\emph{Case 2: $N = 2$.} The first observation is that $\kappa=1/4>0$, so that by \eqref{der K} the function $K$ is strictly positive and decreasing. Note that, by definition, this immediately implies that $|y|$ and $|\dot y|$ stay bounded on $[1,+\infty)$. Thus, it remains to show that $K$ (and hence also $H$) does not tend to $0$ as $t \to +\infty$. Suppose by contradiction that $K(t) \to 0$ as $t \to +\infty$. On the one hand, we claim that 
\be\label{n=2 cl1}
H(t_j) \le C t_j^{-\frac{2(p+1)}{p-1}}.
\ee
To prove this claim, we note that by \eqref{der K} and the assumption $K(\infty)=0$ 
\be\label{exp K}
K(t) = \kappa\int_{t}^{+\infty} \frac{y^2(s)}{s^3}\,ds.
\ee
Since $t_j$ is a zero of $y$, we have $H(t_j) = K(t_j)$, and hence, by monotonicity,
\[
H(t) = K(t)-\frac{\kappa}{2t^2}y^2(t) \le K(t) \le K(t_j) = H(t_j)
\]
for every $t \ge t_j$. In particular, $|y(t)|^{p+1} \le (p+1)H(t_j)$ for any such $t$, which using \eqref{exp K} implies
\[
H(t_j) =K(t_j) \le C H(t_j)^{\frac{2}{p+1}} t_j^{-2},
\]
whence \eqref{n=2 cl1} follows. On the other hand, we show now that 
\be\label{n=2 cl2}
H(t_j) \ge \frac{C}{t_j},
\ee
which gives a contradiction with \eqref{n=2 cl1} for $j$ sufficiently large. To prove this estimate, we use the ``old" function and variable $v(r)$, and we consider
\[
P(r) = r^2\left( \frac{1}{2}|v'(r)|^2 + \frac{1}{p+1}|v(r)|^{p+1}\right).
\]
By using the fact that $N=2$, a simple computation yields 
\[
P'(r) = \frac{2r}{p+1}|v(r)|^{p+1}>0,
\]
namely $P$ is positive and strictly increasing. Furthermore, at each zero $r_j$ of $v$, recalling the relation between $r_j$ and $t_j$ we have
\[
P(r_j) = \frac12 r_j^{\frac{4}{p+3}} |\dot y(t_j)|^2 = b t_j H(t_j).
\]
Hence, by monotonicity and positivity of $P$, we deduce that $b t_j H(t_j) \ge C>0$, whence \eqref{n=2 cl2}, and hence also the convergence $H(t), K(t) \to \ell>0$, follow.

\emph{Case 3: $N=1$.} In this case $a=\kappa=0$. Therefore, the problem is autonomous and $y$ is a periodic (non-trivial) solution. Thus, the conclusion $H(t), K(t) \to \ell>0$ follows trivially.

The previous discussion proves the validity of point ($i$) of the lemma, in any dimension. Point ($ii$) is a simple consequence of point ($i$) and of the compactness of $\Gamma_\ell$. It remains to prove \eqref{rate K}. By direct computations
\[
\dot H(t) = -\frac{\kappa}{t^2} y(t) \dot y(t) = -\frac{\kappa}{2t^2}\frac{d}{dt} y(t)^2,
\]
whence
\[
\ell - H(t) = \frac{\kappa}{2t^2} y(t)^2-\kappa \int_t^{+\infty} \frac{y(s)^2}{s^3}\,ds.
\]
In view of the boundedness of $y$, this gives the desired result.
\end{proof}

\noindent\textbf{Step 2) Convergence to a periodic solution of the autonomous nonlinear oscillator.}
We next prove that \(y\) converges, together with its derivative, to a suitable
time translation of a periodic solution of the limiting autonomous equation.

\begin{lemma}\label{lem: convergence}
There exists a periodic solution $Y_\infty$ of the equation $\ddot Y_\infty+|Y_\infty|^{p-1}Y_\infty=0$ and a real number $\tau_\infty \in \R$ such that
\[
|y(t)-Y_\infty(t+\tau_\infty)|
+
|\dot y(t)-\dot Y_\infty(t+\tau_\infty)|
=
O(t^{-1})
\qquad\text{as }t\to+\infty.
\]
\end{lemma}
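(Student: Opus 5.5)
We plan to use action–angle type coordinates for the autonomous oscillator $\ddot Y+|Y|^{p-1}Y=0$. For each $c>0$ let $Y_c$ denote the solution on $\Gamma_c$ with $Y_c(0)=0$, $\dot Y_c(0)>0$. Its period $T(c)$ depends smoothly on $c$: by scaling, $T(c)=T(1)\,c^{-\frac{p-1}{2(p+1)}}$. Near $\Gamma_\ell$, every point $(q,w)$ with $\mathcal H(q,w)$ close to $\ell$ can be written uniquely (modulo period) as $(Y_c(\theta),\dot Y_c(\theta))$ with $c=\mathcal H(q,w)$ and phase $\theta$. This map is a smooth local diffeomorphism in a neighbourhood of $\Gamma_\ell$. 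By Lemma \ref{lem: studio energia}($ii$), for $t$ large $(y(t),\dot y(t))$ lies in this neighbourhood, so we may write $y(t)=Y_{c(t)}(\theta(t))$, $\dot y(t)=\dot Y_{c(t)}(\theta(t))$ with $c(t)=H(t)$.

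\textbf{Step 1: the energy.} By Lemma \ref{lem: studio energia}($iii$), $|c(t)-\ell|\le C t^{-2}$.

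\textbf{Step 2: the phase equation.} Differentiating the identities above and using the equation $\ddot y=-|y|^{p-1}y-\kappa t^{-2}y$, we get $\dot\theta=1+g(c,\theta)\,\kappa t^{-2}$. Here $g$ is smooth and bounded near $\Gamma_\ell$: it arises from inverting the Jacobian of $(c,\theta)\mapsto(Y_c,\dot Y_c)$ (which is nondegenerate since $\dot c\neq0$ directions are transversal) applied to the perturbation vector $(0,-\kappa t^{-2}y)$ and to the term $\dot c\,\partial_c$. Hence $\theta(t)-t=\theta(t_0)-t_0+\int_{t_0}^t O(s^{-2})\,ds$ converges to some $\tau_\infty$, with $|\theta(t)-t-\tau_\infty|\le C t^{-1}$. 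A cleaner way to avoid a possibly non-smooth derivative in $c$ is to choose $\theta$ via the geometric angle of the rescaled point $(c^{-1/(p+1)}q,\,c^{-1/2}w)$ on $\Gamma_1$, using the scaling $Y_c(\theta)=c^{1/(p+1)}Y_1(c^{\frac{p-1}{2(p+1)}}\theta)$. In that formulation, define the normalized phase $\phi(t)\in\R/T(1)\Z$ on $\Gamma_1$. Then $\dot\phi=c(t)^{\frac{p-1}{2(p+1)}}+O(t^{-2})$, and since $c(t)=\ell+O(t^{-2})$ this gives $\phi(t)=\ell^{\frac{p-1}{2(p+1)}}(t+\tau_\infty)+O(t^{-1})$.

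\textbf{Step 3: conclusion.} Set $Y_\infty=Y_\ell$. Then
\[
|y(t)-Y_\ell(t+\tau_\infty)|\le |Y_{c(t)}(\theta(t))-Y_\ell(\theta(t))|+|Y_\ell(\theta(t))-Y_\ell(t+\tau_\infty)|.
\]
The first term is $O(|c-\ell|)=O(t^{-2})$, by smooth dependence in $c$ uniformly in the phase (periodic, so uniform). The second term is $O(t^{-1})$ by the Lipschitz bound on $Y_\ell$. The same argument applies to $\dot y$.

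\textbf{Main obstacle.} The main obstacle is Step 2: establishing that the correction to $\dot\theta$ is $O(t^{-2})$ uniformly, including at the turning points where $\dot Y=0$ or $Y=0$. The map $q\mapsto|q|^{p-1}q$ is only $C^1$ at $q=0$ when $p<2$, so the smoothness of the coordinates must be checked. The scaling representation on the fixed curve $\Gamma_1$, which is a $C^1$ regular closed curve, with $\phi$ defined via the flow time along $\Gamma_1$, should make the Jacobian uniformly invertible and give the $C^1$ regularity needed.
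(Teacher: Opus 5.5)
Your normalized-phase version is essentially the paper's proof. The phase $\phi\in\R/T(1)\mathbb Z$ obtained by scaling back to $\Gamma_1$ is the paper's angle variable, up to the factor $T(1)/2\pi$ (since $h^{\beta}/\omega(h)=T(1)/2\pi$ with $\beta=\frac{p-1}{2(p+1)}$) and a shift from your base point. Your $\dot\phi=c^{\beta}+O(t^{-2})$ is the paper's $\dot\theta=\omega(H)+O(t^{-2})$, integrated in the same way. The explicit chart $(c,\phi)\mapsto(c^{1/(p+1)}Y_1(\phi),c^{1/2}\dot Y_1(\phi))$ also settles your ``main obstacle'': it is $C^1$ because $Y_1\in C^2$, and its Jacobian determinant is $-c^{\frac{1}{p+1}-\frac12}\neq0$. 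This gives the $C^1$-diffeomorphism property that the paper only asserts.

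Two intermediate claims are wrong, although the final estimate survives.

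\emph{Unnormalized Step 2.} It fails for a real-valued (lifted) time phase $\theta$. The derivative $\partial_cY_c(\theta)$ contains the secular term $\beta c^{-1/2}\theta\,\dot Y_1(c^{\beta}\theta)$. Consequently $\dot\theta-1=-\beta\dot c\,\theta/c+O(t^{-2})$. For $N\ge2$ this is of exact order $t^{-1}$ and not absolutely integrable, so your integration argument does not apply. The obstruction is that the period depends on the energy, not a lack of smoothness. That is precisely why the angle must be normalized.

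\emph{Step 3.} The function $Y_c-Y_\ell$ is not periodic, since $T(c)\neq T(\ell)$. One only has $|Y_{c}(\theta)-Y_\ell(\theta)|\le C|c-\ell|(1+|\theta|)=O(t^{-1})$, not $O(t^{-2})$. This still suffices, but it is cleaner to conclude in normalized variables, as the paper does. Write $y=c^{1/(p+1)}Y_1(\phi)$, $\dot y=c^{1/2}\dot Y_1(\phi)$ and $Y_\ell(t+\tau_\infty)=\ell^{1/(p+1)}Y_1(\ell^{\beta}(t+\tau_\infty))$. The amplitude error is then $O(t^{-2})$ and the phase error is $O(t^{-1})$.
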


\begin{remark}
Lemma \ref{lem: convergence} is in the spirit of the classical Fowler asymptotics for Emden-Fowler equations. We
point out that the general results on asymptotically autonomous systems (see e.g. \cite{Mar, Thi}) only ensure that the limit set of the trajectory is a periodic orbit of the limiting autonomous system, which is the content of Lemma \ref{lem: studio energia}-($ii$); they do not provide the asymptotic
phase, nor the rate, which are essential in the proof of Proposition \ref{lem: stima massa} below. This is due to the fact that the unperturbed system has a whole period annulus of non-hyperbolic periodic orbits, so
that the classical asymptotic equivalence theorems do not apply. Since we could not find a
reference in the form we need, we give a self-contained proof.
\end{remark}

For the proof, it is convenient to introduce energy-angle variables in the following rather standard way (clearly, these are closely related to the action-angle variables commonly employed in Hamiltonian dynamics). Recall that $\Gamma_h$ is a closed periodic orbit, for every $h>0$. Its amplitude is
\[
A(h)=((p+1)h)^{1/(p+1)}.
\]
Let \(Y_h\) be the solution of $\ddot Y+|Y|^{p-1}Y=0$ with $Y_h(0) = A(h)$, $\dot Y_h(0) = 0$. We denote its period by \(T(h)\) and set
\[
\omega(h):=\frac{2\pi}{T(h)}.
\]
Note that $Y_h(t)=h^{\frac1{p+1}}Y_1\!\big(h^{\frac{p-1}{2(p+1)}}t\big)$, whence $T(h)=T(1)\,h^{-\frac{p-1}{2(p+1)}}$ and $\omega\in C^\infty((0,+\infty))$. We can parametrize the periodic orbit \(\Gamma_h\) by defining
\[
\Phi(h,\theta)
:= X_h\left(\frac{\theta}{\omega(h)}\right), \quad \text{with} \quad 
X_h := (Y_h,
\dot Y_h),
\qquad
\theta\in\mathbb R/(2\pi\mathbb Z).
\]
Thus, \(\theta\) is the time required to move along the nonlinear orbit, normalized so
that one complete period corresponds to an increase of \(2\pi\), counted in the counterclockwise sense. 

Fix \(0<h_-<h_+\) and consider the annular region
\[
\mathcal A
:=
\left\{
(q,v)\in\mathbb R^2:
h_-<H(q,v)<h_+
\right\}.
\]
Then $\Phi:(h_-,h_+)\times\mathbb S^1\longrightarrow\mathcal A$ is a \(C^1\)-diffeomorphism. Indeed, every point of \(\mathcal A\)
belongs to exactly one energy level \(\Gamma_h\), and every point of
\(\Gamma_h\) is attained exactly once by \(Y_h\) during one period. 
If
\[
F(q,v):=\left(v,-|q|^{p-1}q\right)
\]
denotes the vector field of the autonomous oscillator, then the
definition of \(\Phi\) gives
\[
\partial_\theta\Phi(h,\theta)
=
\frac{1}{\omega(h)}F(\Phi(h,\theta)) \quad \iff \quad F(\Phi(h,\theta))
=
\omega(h)\partial_\theta\Phi(h,\theta).
\]
%
%
%
%
%
\begin{proof}[Proof of Lemma \ref{lem: convergence}]
Setting $X(t):=(y(t),\dot y(t))$, the equation of $y$ can be written as
\[
\dot X(t)=F(X(t))+G(t,X(t)), \quad\text{where} \quad
G(t,q,v)
=
\left(0,-\frac{\kappa}{t^2}q\right).
\]
We know that $H(t) \to \ell>0$ as $t \to +\infty$. Hence, for all sufficiently large \(t\), the trajectory \(X(t)\)
remains in a compact annular region surrounding $\Gamma_\ell$, that does not contain the origin.
We may therefore write uniquely $X(t)=\Phi(H(t),\theta(t))$, where \(\theta(t)\) is initially defined modulo \(2\pi\). Since
\([t_0,+\infty)\) is an interval, \(\theta\) admits a continuous
lifting to a real-valued function. 

Now, let \(\Theta\) denote the angular component of the inverse map
\(\Phi^{-1}\). Differentiating the identity $\Theta(\Phi(h,\theta))=\theta$ with respect to $\theta$, and
using that $F(\Phi(h,\theta))=\omega(h)\,\partial_\theta\Phi(h,\theta)$, we obtain \\
$D\Theta(X(t))F(X(t))=\omega(h(t))$, so that
\[
\dot \theta(t)
=
D\Theta(X(t))\dot X(t) =
= \omega(h(t))
+
D\Theta(X(t))G(t,X(t)).
\]
%
%
%
%
The derivative \(D\Theta\) is bounded on the compact annular region
containing the trajectory. Moreover, \(y\) is bounded and
\[
G(t,X(t))
=
\left(0,-\frac{\kappa}{t^2}y(t)\right)
=
O(t^{-2}).
\]
Therefore,
\[
\dot \theta(t)=\omega(H(t))+O(t^{-2}),
\]
and we have already proved that
\[
H(t)-\ell=O(t^{-2}).
\]
Since \(\omega\) is \(C^1\) in a neighborhood of \(\ell\), this yields
\[
\omega(H(t))
=
\omega(\ell)+O(t^{-2}).
\]
Setting $\omega_\infty:=\omega(\ell)$, we conclude that
\[
\dot \theta(t)=\omega_\infty+O(t^{-2}).
\]
Since \(t^{-2}\) is integrable at infinity, there exists
\(\theta_\infty\in\mathbb R\) such that
\[
\theta(t)
=
\omega_\infty t+\theta_\infty+O(t^{-1}).
\]
At this point, it follows from $X(t)=\Phi(H(t),\theta(t))$ and from the regularity of \(\Phi\) that
\[
X(t)
-
\Phi(\ell,\omega_\infty t+\theta_\infty)
= O(t^{-1}) \qquad \text{as $t \to +\infty$}.
\]
Setting $Y_\infty:=Y_\ell$ and $\tau_\infty:=\theta_\infty/\omega_\infty$, the conclusion follows.
\end{proof}

\noindent \textbf{Step 3) Asymptotic estimates for $M_j(0)$.} We are ready to prove the asymptotic estimate \eqref{cl M 0}.

\begin{proposition}\label{lem: stima massa}
There exist constants $0<c_1\le c_2$ such that
\[
c_1\,j^{\frac{4}{p-1}} \le M_j(0) \le c_2\,j^{\frac{4}{p-1}}
\qquad\text{for every }j\in\mathbb N .
\]
In particular, $M_j(0)\to+\infty$ as $j\to\infty$.
\end{proposition}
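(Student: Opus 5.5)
We rewrite $M_j(0)$ in the Emden--Fowler variables and then use Lemma \ref{lem: convergence} to control both the zeros $t_j$ and the integral. Since $v(r)=r^{-a}y(t)$ with $t=r^b/b$, we have $r=(bt)^{1/b}$ and $dr = (bt)^{1/b-1}\,dt$. Substituting in \eqref{mass gamma} with $\gamma=0$ and $R_j(0)=r_j$ gives
\[
M_j(0)=|\mathbb S^{N-1}|\, r_j^{\frac4{p-1}-N}\int_0^{r_j} v^2(s)s^{N-1}\,ds
= C\, t_j^{\frac1b(\frac4{p-1}-N)}\int_0^{t_j} y^2(t)\, t^{\frac{N-2a}{b}-1}\,dt .
\]
The exponent simplifies via $b+N-2a-2=0$, so $N-2a=2-b$ and the weight is $t^{\frac{2}{b}-2}$. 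Set $\beta:=\frac2b-2$. Because $p<2^*-1$, we have $b\in(0,2)$ in all dimensions, hence $\beta>-1$ and the weight is integrable at $0$; indeed near $t=0$, $y(t)=r^a v\sim C t^{a/b}$ is harmless.

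\textbf{Step A: the zeros grow linearly.} Let $Y_\infty$ be the limit from Lemma \ref{lem: convergence}, with period $T_\infty=T(\ell)$. It has exactly two simple zeros per period with $|\dot Y_\infty|=\sqrt{2\ell}$ there. The $O(t^{-1})$ convergence in $C^1$ together with the implicit function theorem shows that, for large $t$, the zeros of $y$ lie within $O(t^{-1})$ of the zeros of $Y_\infty(\cdot+\tau_\infty)$, in one-to-one correspondence. Hence
\[
t_j=\tfrac{T_\infty}{2}\,j+O(1),
\]
and there are constants with $c\,j\le t_j\le C\,j$ for all $j$; for small $j$ this follows from positivity and monotonicity of $t_j$.

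\textbf{Step B: the integral grows like $t_j^{\beta+1}$.} Since $y$ is bounded on $[1,\infty)$ by Lemma \ref{lem: studio energia} and bounded on $[0,1]$ by continuity, the upper bound $\int_0^{t_j}y^2t^\beta\,dt\le C(1+t_j^{\beta+1})$ is immediate. For the lower bound, restrict to $[t_j/2,t_j]$, where $t^\beta\ge c\,t_j^\beta$ whatever the sign of $\beta$. Using Lemma \ref{lem: convergence},
\[
\int_{t_j/2}^{t_j}y^2\,dt\ge \int_{t_j/2}^{t_j}Y_\infty^2(t+\tau_\infty)\,dt-O(\log t_j)\ge c\,t_j ,
\]
because $Y_\infty\not\equiv0$ is periodic and its mean square over an interval of length $t_j/2\gg T_\infty$ is bounded below. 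In fact, $\int_t^{t+T_\infty}y^2\ge c$ for large $t$ also follows directly from the energy lying near $\Gamma_\ell$. Thus $\int_0^{t_j}y^2t^\beta\,dt\asymp t_j^{\beta+1}$ for $j$ large.

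\textbf{Step C: bookkeeping of exponents.} Combining Steps A and B,
\[
M_j(0)\asymp t_j^{\frac1b(\frac4{p-1}-N)+\frac2b-1},
\]
so it remains to check that the exponent equals $\frac4{p-1}$. Writing it as $\frac1b\big(\frac4{p-1}-N+2-b\big)$ and using $N-2+b=2a$, this becomes $\frac1b\big(\frac4{p-1}-2a\big)$. Substituting $a=\frac{2(N-1)}{p+3}$ and $b=\frac{N+2-(N-2)p}{p+3}$:
\[
\frac{4}{p-1}-2a=\frac{4(p+3)-4(N-1)(p-1)}{(p-1)(p+3)}=\frac{4\,(N+2-(N-2)p)}{(p-1)(p+3)}=\frac{4b}{p-1}.
\]
Dividing by $b$ gives exactly $\frac4{p-1}$, so $M_j(0)\asymp t_j^{4/(p-1)}\asymp j^{4/(p-1)}$ for large $j$. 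Finitely many small $j$ are absorbed into the constants since each $M_j(0)>0$.

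The main obstacle is Step A, namely the precise count of zeros of $y$ through the asymptotic phase. This is exactly where the rate and phase in Lemma \ref{lem: convergence} are needed, rather than just the limit-set information of Lemma \ref{lem: studio energia}($ii$). Steps B and C are then routine.
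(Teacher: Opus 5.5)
Your argument is correct and is essentially the paper's: the same Emden--Fowler rewriting of $M_j(0)$ with weight $t^{2/b-2}$ (the paper's $t^q$), the same two-sided bounds of the weighted integral by $t_j^{2/b-1}$ via boundedness of $y$ and comparison with the periodic limit, linear bounds on $t_j$, and the identity $\frac{4}{p-1}-2a=\frac{4b}{p-1}$. The only real difference is in counting zeros: you derive the sharp asymptotics $t_j=\frac{T_\infty}{2}j+O(1)$ from $C^1$-closeness to $Y_\infty(\cdot+\tau_\infty)$, whereas the paper settles for $cj\le t_j\le Cj$, using only uniform convergence (a zero in every window of length $\mathcal T$) and a Poincar\'e inequality on nodal intervals, so, contrary to your closing remark, the rate $O(t^{-1})$ is not actually needed for this statement.
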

\begin{proof}
In order to simplify the notation, we write $Y(t):= Y_\infty(t+\tau_\infty)$, given by Lemma \ref{lem: convergence}.
We denote by $\mathcal T>0$ the minimal period of $Y$ and by
\[
A:=\big((p+1)\ell\big)^{\frac{1}{p+1}}>0
\]
its amplitude, so that $\max Y=A$ and $\min Y=-A$. We first record two
elementary facts.

\smallskip
\emph{(a) $y$ is bounded on $(0,+\infty)$.} We have already observed at the beginning of the section that $|v_0| \le 1$ for every $r \ge 0$. Since
$|y(t)|=r^{a}|v(r)|\le (bt)^{a/b}$ for every $t>0$, we deduce that $y$ is bounded on
$(0,1]$; together with Lemma 2.1-(i) this gives
$\|y\|_{L^\infty(0,+\infty)}\le C$.

\smallskip
\emph{(b) For all sufficiently large $t$, every interval of length $\mathcal T$
contains a zero of $y$.} By Lemma 2.2 there exists $T_1>0$ such that
$|y(t)-Y(t)|<A/2$ for every $t\ge T_1$. Let $\theta\ge T_1$. Since
$[\theta,\theta+\mathcal T]$ has the length of one period, $Y$ attains there both
the value $A$ and the value $-A$: let $s_1,s_2\in[\theta,\theta+\mathcal T]$ be
such that $Y(s_1)=A$ and $Y(s_2)=-A$. Then
\[
y(s_1)>\frac A2>0>-\frac A2>y(s_2),
\]
and hence $y$ vanishes at some point strictly between $s_1$ and $s_2$; in
particular $y$ has a zero in $(\theta,\theta+\mathcal T)$. Choosing
$\theta=t_k$, we conclude that
\[
t_{k+1}-t_k<\mathcal T \qquad\text{whenever } t_k\ge T_1 .
\]

Now, since \(Y\) is continuous, periodic, and nontrivial, there exist an
interval \(I\subset[0,\mathcal T]\), with length \(\lvert I\rvert>0\),
and a constant \(\eta>0\) such that
\[
Y(t)^2\geq 4\eta
\qquad\text{for every }t\in I.
\]
By the uniform convergence of \(y(t)-Y(t)\) to zero for large times, there
exists \(T_0>0\) such that
\[
y(t)^2\geq\eta
\qquad\text{for every }t\in I+k\mathcal T
\]
whenever \(I+k\mathcal T\subset[T_0,+\infty)\).

Define
\[
q:=\frac{2-2b}{b} \ge 0
\]
(with strict inequality for every $N \ge 2$, while $q=0$ if $N=1$). For \(S>0\), let
\[
\mathcal N(S)
:=
\#\bigl\{k\in\mathbb N:
I+k\mathcal T\subset[S/2,S]\bigr\}.
\]
Since the interval \([S/2,S]\) has length \(S/2\), we have
\[
\mathcal N(S)\geq \frac{S}{2\mathcal T}-C \quad \implies \quad \mathcal N(S)\geq cS
\]
for some $c>0$ and all sufficiently large \(S\). Therefore,
\begin{equation}\label{est S}
\int_0^S t^q y(t)^2\,dt\ \ge \sum_{I+k\mathcal T\subset[S/2,S]}
\int_{I+k\mathcal T} t^q y(t)^2\,dt\ \ge\ \eta\left(\frac S2\right)^{q}|I|\,\mathcal N(S)
\ \ge\ c_1 S^{q+1}
\end{equation}
for every sufficiently large $S$. On the other hand, since $q\ge0$, fact (a)
gives
\begin{equation}\label{est S2}
\int_0^S t^q y(t)^2\,dt\ \le\ \|y\|_{L^\infty(0,+\infty)}^2\int_0^S t^q\,dt
\ =\ \frac{\|y\|^2_{L^\infty(0,+\infty)}}{q+1}\,S^{q+1}\ \le\ c_2S^{q+1}
\end{equation}
for every $S>0$.

We now return to the radial variable $r$ and the original function $v=v_0$. Recalling that $t=r^b/b$, $dt=r^{b-1}\,dr$, and \(2a=b+N-2\), we obtain
\[
\int_0^{r_j}v(r)^2r^{N-1}\,dr=
\int_0^{r_j}
y\left(\frac{r^b}{b}\right)^2r^{N-1-2a}\,dr =
b^q\int_0^{t_j}t^qy(t)^2\,dt.
\]
Consequently, by \eqref{est S} and \eqref{est S2} and since
\[
t_j^{\,q+1}=b^{-(q+1)}r_j^{\,b(q+1)}=b^{-(q+1)}r_j^{\,N-2a},
\]
we deduce that
\begin{equation}\label{791}
c_1\,r_j^{\,N-2a}\ \le\ \int_0^{r_j} v(r)^2 r^{N-1}\,dr\ \le\ c_2\,r_j^{\,N-2a},
\end{equation}
where we used the fact that $b(q+1)=2-b=N-2a$. It remains to estimate \(r_j\) in terms of $j$. Since \(y\) and \(\dot y\) are
bounded (Lemma \ref{lem: studio energia}-($i$)), \(y\) satisfies
\[
\ddot y+c(t)y=0,
\quad \text{where} \quad
c(t):=|y(t)|^{p-1}+\frac{\kappa}{t^2},
\]
and $|c(t)|\leq\bar C$ for all sufficiently large \(t\), for some \(\bar C>0\). On every
nodal interval \((t_k,t_{k+1})\) contained in this region, multiplying
the equation by \(y\) and integrating by parts gives
\[
\int_{t_k}^{t_{k+1}}|\dot y(t)|^2\,dt
=
\int_{t_k}^{t_{k+1}}c(t)y(t)^2\,dt
\leq
\bar C\int_{t_k}^{t_{k+1}}y(t)^2\,dt.
\]
On the other hand, the Poincar\'e inequality yields
\[
\int_{t_k}^{t_{k+1}}y(t)^2\,dt
\leq
\left(\frac{t_{k+1}-t_k}{\pi}\right)^2
\int_{t_k}^{t_{k+1}}|\dot y(t)|^2\,dt.
\]
Since \(y\) is nontrivial on \((t_k,t_{k+1})\), it follows that
\[
t_{k+1}-t_k\geq\frac{\pi}{\sqrt{\bar C}}.
\]
Summing over the nodal intervals, we obtain $t_j\ge cj$ for all sufficiently
large $j$, for some $c>0$. On the other hand, by fact (b) we have
$t_{k+1}-t_k<\mathcal T$ for every $k$ such that $t_k\ge T_1$, and therefore also
$t_j\le Cj$ for all sufficiently large $j$. Recalling that $r_j=(bt_j)^{1/b}$, we
conclude that
\[
c\,j^{1/b}\ \le\ r_j\ \le\ C\,j^{1/b}
\qquad\text{for all sufficiently large } j .
\]
These estimates, together with \eqref{mass gamma} and \eqref{791}, finally allow us to conclude. Changing variables in \eqref{mass gamma}, we have
\[
M_j(0)=|\mathbb S^{N-1}|\,r_j^{\frac{4}{p-1}-N}\int_0^{r_j}v(r)^2r^{N-1}\,dr ,
\]
so that, by \eqref{791},
\[
c_1\,r_j^{\frac{4}{p-1}-2a} \le M_j(0)\le c_2\,r_j^{\frac{4}{p-1}-2a} .
\]
Since
\[
\frac{4}{p-1}-2a=\frac{4b}{p-1}>0 ,
\]
the two-sided estimate on $r_j$ gives the desired result.
\end{proof}

\begin{proof}[Conclusion of the proof of Theorem \ref{thm: existence in the ball} in the radial case]
As already observed, having established \eqref{cl M infty} and \eqref{cl M 0}, the theorem follows directly.
\end{proof}

\begin{remark}\label{rem on lambda_j}
The frequencies of the solutions given by Theorem \ref{thm: existence in the ball} are localized by the radial
Dirichlet spectrum of the ball. Indeed $\lambda_{j,\gamma}=-\gamma R_j(\gamma)^2
=-\bar R_j(\gamma)^2$, where $\bar R_j(\gamma)=\sqrt\gamma R_j(\gamma)$ is the $j$-th zero
of $z_\gamma$; since $z_\gamma$ solves
\[
  \big(r^{N-1}z_\gamma'\big)'+r^{N-1}\Big(1+\tfrac1\gamma|z_\gamma|^{p-1}\Big)z_\gamma=0
\]
and $\|z_\gamma\|_{L^\infty}^2\le1+\frac{2}{\gamma(p+1)}$ by \eqref{uniform bound z}, the potential lies
between $1$ and $1+C_p/\gamma$, with $C_p:=\big(1+\frac{2}{p+1}\big)^{\frac{p-1}{2}}$.
Comparing with the corresponding constant-coefficient equations, whose solutions bounded
at the origin are $z(r)$ and $z(\sqrt{1+C_p/\gamma}\,r)$, the Sturm comparison theorem
gives
\[
  \frac{\zeta_j}{\sqrt{1+C_p/\gamma}}\ \le\ \bar R_j(\gamma)\ <\ \zeta_j
  \qquad\text{for every } \gamma\ge1 ,
\]
with $C_p$ independent of $j$. Since $\zeta_j^2$ is the $j$-th radial Dirichlet eigenvalue
of $B_1$, we deduce on the one hand that $-\zeta_j^2<\lambda_{j,\gamma}<0$ for every
$\gamma>0$, and on the other hand that $|\lambda_{j,\gamma}|=\zeta_j^2\big(1+O(\gamma^{-1})\big)$
uniformly in $j$. Recalling that $\zeta_j=\pi j+O(1)$ as $j\to\infty$, it follows that
whenever the parameters $\bar\gamma_j$ in the proof of Theorem 1.1 can be chosen so that
$\bar\gamma_j\to+\infty$, the associated frequencies satisfy
$\lambda_j=-\pi^2j^2\big(1+o(1)\big)\to-\infty$.
\end{remark}

\section{Proof of Theorem \ref{thm: existence in the ball} - non-radial case}\label{sec: non-rad}

This is a rather direct extension of \cite[Theorem 1.13]{PieVer}. Let
\[
m:= \left\lfloor \frac{N}{2} \right\rfloor \ge 2,
\]
where $\lfloor \cdot \rfloor$ denotes the integer part. We consider the first \(2m\) variables and group them into pairs: $(x_1,x_2)$, \ldots, $(x_{2m-1},x_{2m})$. Since $N \ge 4$, we have at least two pairs of variables. The idea is to adapt the construction in \cite{PieVer} to each pair (and not only to the first pair $(x_1,x_2)$, as in \cite{PieVer}). Let $k \in \N$, and in each plane $(x_{2i-1},x_{2i})$ let us consider a circular sector of opening $\pi/k$. The intersection of these circular sectors with the ball $B_1$ forms a set $D_k \subset \R^N$ which, by means of reflections in each pair of variables, generates $(2k)^m$ copies covering $B_1$ ($2k$ copies for each pair of variables). For $i=1,\dots,m$ let $\theta_i$ denote the angular variable in the plane
$(x_{2i-1},x_{2i})$, and set $\beta:=\frac{\pi}{2k}$, so that
\[
  D_k=B_1\cap\bigcap_{i=1}^{m}\{0<\theta_i<2\beta\} .
\]
Since $2\beta\le\pi/2$ for $k\ge2$, each sector is the intersection of the two
half-spaces $\{x_{2i}>0\}$ and $\{x_{2i}\cos2\beta-x_{2i-1}\sin2\beta<0\}$; hence $D_k$
is convex, being the intersection of $B_1$ with $2m$ half-spaces. Let $x^k$ be the point
defined by
\[
  (x^k_{2i-1},x^k_{2i})=\frac{d_k}{\sqrt m}\,(\cos\beta,\sin\beta)
  \quad (i=1,\dots,m),\qquad x^k_j=0 \quad (j>2m),
  \qquad d_k:=\frac{\sqrt m}{\sqrt m+\sin\beta},
\]
so that $|x^k|=d_k$ and $\theta_i(x^k)=\beta\in(0,2\beta)$ for every $i$. The distance of
$x^k$ from $\{x_{2i}=0\}$ equals $|x^k_{2i}|=\frac{d_k}{\sqrt m}\sin\beta$, while its
distance from $\{x_{2i}\cos2\beta-x_{2i-1}\sin2\beta=0\}$, whose unit normal is
$(-\sin2\beta,\cos2\beta)$ in the plane $(x_{2i-1},x_{2i})$, equals
\[
  \frac{d_k}{\sqrt m}\,\big|\sin\beta\cos2\beta-\cos\beta\sin2\beta\big|
  =\frac{d_k}{\sqrt m}\,\sin\beta .
\]
By the choice of $d_k$ we also have $\operatorname{dist}(x^k,\partial B_1)=1-d_k
=\frac{d_k}{\sqrt m}\sin\beta$; since $D_k$ is convex, it therefore contains the ball
centred at $x^k$ of radius
\[
  \rho_k=\frac{\sin\beta}{\sqrt m+\sin\beta}
  \ \ge\ \frac{1}{\sqrt m+1}\,\sin\frac{\pi}{2k}
  \ \ge\ \frac{1}{(\sqrt m+1)\,k},
\]
the last inequality following from the concavity of the sine on $[0,\pi]$. Since
$m\le N/2$, the monotonicity of $\Lambda_1$ with respect to domain inclusion and its
scaling properties give $\Lambda_1(D_k)\le\rho_k^{-2}\Lambda_1(B_1)\le Ck^2$, with
$C=C(N)$ independent of $k$ (here and in what follows, $\Lambda_1$ denotes the first eigenvalue of the Dirichlet Laplacian).

Let us consider now
\[
\begin{cases}
-\Delta u+\lambda u = |u|^{p-1}u & \text{in $D_k$} \\
u=0 & \text{on $\pa D_k$}, 
\end{cases} \quad \text{with} \quad \int_{D_k} u^2\,dx = \alpha.
\]
According to \cite[Theorem 1.8]{PieVer}, this problem has a positive solution (obtained as local minimizer of the associated energy on the $L^2$-sphere) for every $\alpha \in (0,\bar \alpha)$, with $\bar \alpha\ge C_{N,p}\Lambda_1(D_k)^{\frac{2}{p-1}-\frac{N}{2}}$. By odd independent reflections in each pair of variables $(x_1,x_2)$, \dots, $(x_{2m-1},x_{2m})$, this gives a non-radial solution to \eqref{pb ex ball}-\eqref{mass constraint} for every $\rho \in (0,\bar \rho)$, with 
\[
\bar \rho \ge (2k)^m C_{N,p}\Lambda_1(D_k)^{\frac{2}{p-1}-\frac{N}{2}} \ge 2^m C_{N,p}' k^{m+\frac{4}{p-1}-N}.
\]
where we used the estimate on $\Lambda_1(D_k)$ and the fact that the exponent of $\Lambda_1(D_k)$ is nonpositive for $p \ge 1+4/N$. Note also that the exponent of $k$ on the right hand side is positive for 
\[
m+\frac{4}{p-1}-N>0 \quad \iff \quad p<1+\frac{4}{N-m}.
\]
For any such exponent, by taking the limit as $k \to +\infty$, the previous argument gives the existence of a solution to \eqref{pb ex ball}-\eqref{mass constraint} for every $\rho>0$. The range $1+4/N \le p<1+4/(N-m)$ covers the full interval $[1+4/N, 2^*-1=1+4/(N-2))$ if and only if $m \ge 2$, namely if and only if $N \ge 4$. This completes the proof.

\section*{Acknowledgments} 
The author is a member of the Gruppo Nazionale per l'Analisi Matematica, la Probabilit\`a e le
loro Applicazioni (GNAMPA) of the Istituto Nazionale di Alta Matematica (INdAM). 
\medskip

\noindent \textbf{Data availability:} No data were used for the research described in the article.

\medskip

\noindent \textbf{Conflict of interest:} The author declares that he has no conflict of interest.

\end{document}